\newtheorem{theorem}{Theorem}[section]
\newtheorem{lemma}[theorem]{Lemma}
\newtheorem{corollary}[theorem]{Corollary}
\theoremstyle{definition}
\newtheorem{definition}[theorem]{Definition}
\theoremstyle{remark}
\numberwithin{equation}{section}
\begin{document}
\setcounter{page}{1}

\title[Visibility of lattice points]{Visibility of Lattice points across polynomials}

\author[Chahat Ahuja]{Chahat Ahuja$^{1}$}

\address{$^{1}$ Department of Mathematics, IIITD, New Delhi.}
\email{\textcolor[rgb]{0.00,0.00,0.84}{chahat22138@iiitd.ac.in}}
\keywords{Visible lattice points, Polynomials, Number theory, Gcd, Density}

\begin{abstract}
The visibility of lattice points from the origin along a polynomial family of curves constitutes a significant generalization of visibility along straight lines. Following the classical notion (density $=\zeta(2)^{-1}$ \raisebox{0.2cm}{\cite{Sylvester}}) and its generalisation to monomial curves $y=ax^b$ (density $=\zeta(b+1)^{-1}$ \raisebox{0.2cm}{\cite{J.Christopher}}), We work with  the family
\[
\mathcal{F}(a_n,\dots,a_1)=\bigl\{\,y=q(a_n x^n+\cdots+a_1 x)\mid q\in\mathbb{Q}^+\bigr\},
\]
We propose a novel criterion, based on a polynomial--gcd condition, which establishes a lower bound for the number of $F$-visible points in $\mathbb{N}^2$. Conversely, we also derive results for the polynomial curve, specifying the conditions under which a given point becomes the next visible lattice point on the curve.Moreover, utilizing the principle of inclusion--exclusion, an exact double-sum formula is derived for 
$$\#\{(a,b)\leq N :(a,b) \hspace{0.1cm}\text{is}\hspace{0.1cm} \mathcal{F}(a_{n} , a_{n-1} \cdots , a_{1})\text{--visible}\}$$
The framework is further extended to address additional problems. The paper concludes by posing several open questions regarding gap distributions and quantitative bounds for non-visible points within this geometric context. This extension provides a more robust theoretical foundation for understanding lattice point visibility in non-linear settings, moving beyond the traditional focus on linear or monomial relationships.
\end{abstract}
 \maketitle

\section{Introduction and Background}

\noindent A point $(a, b)$ in the two-dimensional integer lattice $\mathbb{Z}^{2}$ is said to be visible from the origin if there exists no other point of $\mathbb{Z}^{2}$ on the line segment joining the origin and the point. The set of visible lattice points is in one-to-one correspondence with lines passing through the origin of rational slope (including infinite slope).\\
If,we restrict our attention only to points in $\mathbb{N}^{2}$,then
visible lattice points in $\mathbb{N}^{2}$ are in bijection with family of lines
\vspace{-0.7cm}
\begin{center}
$$\{y = qx \hspace{0.1cm} | \hspace{0.1cm} q \in \mathbb{Q}^{+}\}. \quad (1)$$
\end{center}
A classical result proved by Sylvester \cite{Sylvester} in 1883 indicates that the density of visible lattice points in $\mathbb{Z}^{2}$ is $\frac{1}{\zeta(2)} = \frac{6}{\pi^2}$ where $\zeta(s)$ is the Riemann zeta function.\\
In fact, a similar argument establishes that for $k \ge 2$  the proportion of visible lattice points in $\mathbb{Z}^{k}$ (analogously defined) is given by $\frac{1}{\zeta(k)}$ \cite{J.Christopher}.\\
\vspace{-0.021cm}
\noindent In $2017$, Goins, Harris, Kubik and Mbirika generalised the classical definition of lattice point visibility by fixing $b \in  \mathbb{N}$ and considering curves of the form $f(x) = ax^{b}$ with $a \in \mathbb{Q}$ \cite{Goins}. In this new setting, a lattice point $(r, s) \in \mathbb{N}^{2}$ is said to be $b$-visible if it lies on the graph of a curve of the form $f(x) = ax^{b}$ with $a \in \mathbb{Q}$ and no other integer lattice points are lying on this curve between $(0, 0)$ and $(r, s)$. Hence, setting $b = 1$ recovers the classical
definition of lattice point visibility. In the $b$-visibility setting, Goins et. al. established that the proportion of $b$-visible lattice points in $\mathbb{N}^2$
is given by $\frac{1}{\zeta(b+1)}$ .\\
\vspace{-0.021cm}
Harris and Omar\cite{P.E. Harris} expanded this work to power functions with rational exponents by establishing that the proportion of $\frac{b}{a}$-visible lattice points in $\mathbb{N}^2_{a}$ (the set non-negative integers that are  $a$th powers) is given
by $\frac{1}{\zeta(b+1 )}$, and the proportion of $\frac{-b}{a}$-visible lattice points in $\mathbb{N}^2_a$ is given by $\frac{1}{\zeta(b)}$ \\
\vspace{-0.018cm}
We aim to extend this notion to polynomial families by exploring different methods for the visibility of lattice points around various families of polynomials with non-negative integer coefficients.We use notations, key definitions, and concepts related to polynomial families and their visibility, as introduced in the work of Sneha Chaubey, Ashish Kumar Pandey, and Shvo Regavim \cite{Sneha} on the density of visible lattice points along polynomials.\\

As mentioned , the notion of visibility  for points in $\mathbb{N}^{2}$  is generalized to the family $\mathcal{F}$ in Definition 1.1 as follows.\\
\\
\textbf{Definition 1.1.} \emph{For a fixed vector} $(a_n, a_{n-1},\cdots , a_1) \in \mathbb{Z}^{n}$ with $a_n \neq 0$, $a_i \ge 0$ $\emph{for all}$ $1 \leq i \leq n$, $\emph{and}$ $\gcd(a_n, a_{n-1}, \cdots, a_1) = 1$, $\emph{let}$

$$\mathcal{F}(a_n, a_{n-1}, \cdots , a_1) := \{y = q(a_nx^{n} + a_{n-1}x^{n-1} + \cdots + a_1x) \hspace{0.1cm} | \hspace{0.1cm} q \in \mathbb{Q}^{+}
\}.$$\\
\textbf{Definition 1.2.} \emph{Consider the family} $\mathcal{F}(a_n, a_{n-1}, \cdots , a_1)$ \emph{as in Definition 1.1. A point} $(r, s)$ \emph{in} $\mathbb{N}^2$ \emph{is said to be} $\mathcal{F}(a_n, a_{n-1}, \cdots , a_1)$\emph{-visible if there exists a ,} $q \in \mathbb{Q}^{+}$ \emph{such that} $s = q(a_{n}r^{n} +a_{n-1}r^{n-1} +\cdots +a_1r)$ \emph{and there is no other point of} $\mathbb{N}^{2}$ \emph{on the curve} $$y = q(a_nx^{n} + a_{n-1}x^{n-1} + \cdots + a_1x)$$ \emph{between the origin and} $(r, s)$. \\ \emph{Denote the set of all} $\mathcal{F}(a_n, a_{n-1}, \cdots , a_1)$ \emph{-visible points in} $\mathbb{N}^{2}$ \emph{by}  $\mathcal{L}(a_n, a_{n-1}, \cdots , a_1).$\\
\\
\textbf{Definition 1.3.} \emph{The density of a set} $\mathcal{S} \subseteq \mathbb{N}^{2}$
\emph{is} $$\mathsf{dens}(S) = \lim_{ N \rightarrow \infty} \frac{|S  \cap R_{N} |}{N^2}$$
\emph{provided the limit exists, where} $N \in \mathbb{N}$, $\left| .\right|$ \emph{denotes the cardinality of a set, and} $R_{N} = \{(a, b) \in \mathbb{Z}^{2}
 \hspace{0.1cm} | \hspace{0.1cm}  1 \leq a, b \leq N\}$. \\

\section{Point to polynomial Visibility}
In this section, we try to explore the converse of the classical approach of studying the visibility of lattice points, i.e., rather than finding visible lattice points for a respective polynomial, we work on finding a polynomial for which a given $(a,b) \in \mathbb{N}^{2}$ is visible.\\

\begin{theorem}
Let $(a,b) \in \mathbb{N}^{2}$, and let $\ell$ be a prime number satisfying $\ell > \max(a,b)$. Suppose that the base representation $a$ of $\ell$ is given by
\[
\ell = a_{d}a_{d-1}\cdots a_{1},
\]
where $a_{i}$ are the digits in base-$a$ expansion of $\ell$, i.e.,
\[
0 \leq a_{i} \leq a-1 \quad \text{for all } i.
\]
define the polynomial
\[
\mathcal{P}(x) := \frac{xb}{\ell a}\left(a_{d}x^{d-1} + a_{d-1}x^{d-2} + \cdots + a_{1}\right).
\]
Then the point $(a,b)$ lies on the curve $\mathcal{P}(x)$, and there is no lattice point on the segment of $\mathcal{P}(x)$ that joins $(0,0)$ to $(a,b)$.

\end{theorem}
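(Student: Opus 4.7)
The plan is to verify first that $(a,b)$ lies on $y=\mathcal{P}(x)$ and then rule out every lattice point $(r,s)$ with $1\leq r<a$. Write $f(x):=a_{d}x^{d-1}+a_{d-1}x^{d-2}+\cdots+a_{1}$, so that $\mathcal{P}(x)=\tfrac{xb}{\ell a}f(x)$. The base-$a$ expansion hypothesis is precisely the identity $f(a)=\ell$, and substituting $x=a$ gives $\mathcal{P}(a)=\tfrac{ab}{\ell a}\cdot\ell=b$, which settles the first assertion.

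For the second assertion, a point $(r,s)\in\mathbb{N}^{2}$ with $r\geq 1$ lies on the curve iff $\mathcal{P}(r)=\tfrac{r\, b\, f(r)}{\ell a}$ is a positive integer; since $a_{d}\geq 1$ makes every factor positive, the content is to show $\mathcal{P}(r)\notin\mathbb{Z}$ for $1\leq r<a$. I would argue via primality: if $\mathcal{P}(r)\in\mathbb{Z}$, then in particular $\ell\mid r\, b\, f(r)$. The hypothesis that $\ell$ is prime with $\ell>\max(a,b)$ immediately gives $\gcd(\ell,b)=1$ and $\gcd(\ell,r)=1$ for every $1\leq r<a$, so the entire problem reduces to establishing $\gcd(\ell,f(r))=1$.

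The technical core is then the strict estimate $0<f(r)<\ell$ for $1\leq r<a$, which together with primality of $\ell$ forces $\gcd(\ell,f(r))=1$. Positivity comes from $a_{d}\geq 1$. For the upper bound, the digit constraint $0\leq a_{i}\leq a-1$ combined with $r<a$ produces the term-by-term comparison $a_{i}r^{i-1}\leq a_{i}a^{i-1}$, which is strict on the leading term because $a_{d}\geq 1$ and $r^{d-1}<a^{d-1}$ whenever $d\geq 2$. The degree inequality $d\geq 2$ is automatic from the hypothesis: if $d=1$ then $\ell=a_{1}\leq a-1<\ell$, a contradiction. Summing the term-by-term inequalities yields $f(r)<f(a)=\ell$, so $\ell\nmid f(r)$ as required.

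I do not foresee a serious obstacle; the argument is essentially a clean interaction between the identity $f(a)=\ell$ (which encodes the base-$a$ expansion and secures incidence at the target) and the strict inequality $f(r)<\ell$ (which, via primality, blocks every intermediate integer evaluation). The only care needed is to pinpoint exactly where each hypothesis enters the chain ($\ell$ prime for the divisibility step, $\ell>\max(a,b)$ for coprimality with $r$ and $b$, and $a_{d}\geq 1$ both for positivity of $f(r)$ and for the strict degree inequality $d\geq 2$).
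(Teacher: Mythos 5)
Your proposal is correct and follows essentially the same route as the paper's own proof: verify $\mathcal{P}(a)=b$ from the base-$a$ identity $f(a)=\ell$, then for $1\le t<a$ use $0<f(t)<\ell$ together with the primality of $\ell$ and $\ell>\max(a,b)$ to conclude $\gcd(\ell,\,t\,b\,f(t))=1$, so $\mathcal{P}(t)\notin\mathbb{Z}$. Your write-up is in fact slightly more careful than the paper's, since you justify the strict inequality $f(t)<\ell$ term by term and note that $d\ge 2$ is forced by the hypotheses, steps the paper asserts without comment.
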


\vspace{0.35cm}
\begin{proof}
We have
\[
\ell = a_{d}a^{d-1} + a_{d-1}a^{d-2} + \cdots + a_{1},
\]
which immediately gives
\[
\mathcal{P}(a) = b.
\]
Furthermore, for any $0 < t < a$, we have
\[
\frac{tb}{\ell a} \cdot (a_{d}t^{d-1} + a_{d-1}t^{d-2} + \cdots + a_{1}) \notin \mathbb{Z},
\]
because
\[
\gcd\left(\ell,\, a_{d}t^{d-1} + a_{d-1}t^{d-2} + \cdots + a_{1}\right) = 1,
\]
as $\ell$ is a prime and
\[
a_{d}t^{d-1} + a_{d-1}t^{d-2} + \cdots + a_{1} < \ell.
\]
Moreover,
\[
\gcd\left(\ell,\, a_{d}t^{d} + a_{d-1}t^{d-1} + \cdots + a_{1}t\right) = 1,
\]
since $\ell$ is a prime larger than $\max(a,b)$, implying $\ell > t$, and also $\gcd(\ell, b) = 1$.
Hence, the claim follows. \qed
\end{proof}

The polynomial
\[
a_{d}x^{d-1} + a_{d-1}x^{d-2} + \cdots + a_{1}
\]
is irreducible in $\mathbb{Z}[x]$, as proved by Brillhart, Filaseta, and Odlyzko~\cite{Filaseta}. 
\vspace{0.4cm} \\
\textbf{Illustration} of $\textbf{Theorem 2.1.}$\\

 1. Let $(a,b) = (3,5)$ and choose the prime $\ell = 7 > \max\{3,5\}$.  Write $7$ in base $a=3$:\\

  $7 = 2\cdot 3^1 + 1 \cdot 3^0,
  \quad (a_2,a_1) = (2,1), \; d = 2.$
Then
\[
  \mathcal P(x)
  = \frac{b}{\ell\,a}\,x\,(a_2 x^{d-1} + a_1)
  = \frac{5x}{7\cdot3}\,(2x+1)
  = \frac{5x(2x+1)}{21}.
\]
For $x=1,2$ we compute
\[
  \mathcal P(1) = \frac{5\cdot1\cdot3}{21} = \frac{5}{7}, 
  \quad
  \mathcal P(2) = \frac{5\cdot2\cdot5}{21} = \frac{50}{21},
\]
neither of which is an integer.  At $x=3$, $\mathcal P(3)=5$, recovering the endpoint $(3,5)$.  Hence, no interior lattice point lies on the curve.\\

\noindent
\textbf{Remark.} Observe that the statement of \textbf{Theorem~2.1} holds for every prime $\ell$ satisfying $\ell > \max(a,b)$. Consequently, there exist infintely many primes $\ell$ for which \textbf{Theorem~2.1} remains valid.
\\

This theorem shows that for any given lattice point \((a,b) \in \mathbb{N}^2\), one can always construct a polynomial \(\mathcal{P}(x)\) such that the point \((a,b)\) lies on its graph and the segment from \((0,0)\) to \((a,b)\) contains no other lattice points that is, \((a,b)\) is \emph{visible} on this curve.\\\\
Although this result formally demonstrates that visibility can always be achieved by tailoring a polynomial to a point, it is somewhat artificial in nature. The construction depends on the choice of a large prime \(\ell\) and encodes information about \(a\) in its base-\(a\) representation modulo \(\ell\), making it highly specific to the chosen point. Thus, although visibility is ensured, it does not address the more natural and mathematically rich question of determining which lattice points are visible \emph{for a family of polynomials}. That problem of studying visibility patterns and densities for general or structurally constrained polynomial families is far more interesting and subtle, revealing deeper arithmetic and geometric properties of the curve. We, now, provide a structured proof for the existence of such a relationship between polynomial curves and points. \\

\begin{lemma}

Let $(a_1,a_2,\dots,a_n)\in\mathbb{N}^n$ be any fixed lattice point.Choose a prime
\[
  \ell \;>\;\max\{a_1,a_2,\dots,a_n\},
\]
and its base‑$a_1$ expansion
\[
  \ell = a_d\,a_{d-1}\,\dots\,a_1
  \quad(\text{digits }0\le a_i < a_1).
\]
For each coordinate $2\le k\le n$,
define the polynomial
\[
  \mathcal P_k(x)
  \;=\;
  \frac{a_k}{\ell\,a_1}\;x\;\bigl(a_d\,x^{d-1}+a_{d-1}\,x^{d-2}+\cdots+a_1\bigr)
\]
Consider the curve
\[
  \Gamma: x \;\longmapsto\;
  \bigl(x,\;\mathcal P_2(x),\;\mathcal P_3(x),\;\dots,\;\mathcal P_n(x)\bigr),
  \quad x\in[0,a_1].
\]
Then:

(a) $\Gamma(0) = (0,0,\dots,0)$ and $\Gamma(a_1)=(a_1,a_2,\dots,a_n)$.\\

(b) For every integer $1\le x\le a_1-1$, none of the coordinates $\mathcal P_k(x)$ is an integer. Hence,$\Gamma$ meets no other lattice points in $\mathbb{Z}^n$.\\

\end{lemma}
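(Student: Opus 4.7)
The plan is to lift the single-coordinate argument of Theorem~2.1 to a coordinate-by-coordinate verification, exploiting the fact that each polynomial $\mathcal{P}_k$ has the same denominator $\ell\,a_1$ and the same base-$a_1$ structured factor $a_d x^{d-1} + \cdots + a_1$, differing across $k$ only by the scalar $a_k$. In effect, the lemma is $n-1$ parallel copies of Theorem~2.1 sharing a common prime $\ell$ and a common base-$a_1$ expansion.

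For part (a), I first note that $\mathcal{P}_k(0) = 0$ by construction, giving $\Gamma(0) = (0,\dots,0)$. For $\Gamma(a_1)$, I invoke the defining base-$a_1$ expansion $\ell = a_d a_1^{d-1} + a_{d-1} a_1^{d-2} + \cdots + a_1$; substituting $x = a_1$ into $\mathcal{P}_k$ collapses the bracket to $\ell$, yielding
\[
\mathcal{P}_k(a_1) \;=\; \frac{a_k}{\ell\, a_1}\cdot a_1 \cdot \ell \;=\; a_k,
\]
so $\Gamma(a_1) = (a_1,a_2,\dots,a_n)$.

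For part (b), fix an integer $t$ with $1 \le t \le a_1 - 1$ and any $k \in \{2,\dots,n\}$, and write $\mathcal{P}_k(t) = \frac{a_k\,t\,(a_d t^{d-1} + \cdots + a_1)}{\ell\, a_1}$. The strategy is to show the numerator is coprime to the prime $\ell$, so that the factor $\ell$ in the denominator cannot be cancelled and the value is non-integral. Three divisibility checks are needed: (i) $\ell \nmid a_k$, because $\ell > \max\{a_1,\dots,a_n\} \ge a_k$; (ii) $\ell \nmid t$, because $\ell > a_1 > t$; and (iii) $\ell \nmid (a_d t^{d-1} + \cdots + a_1)$, which I would establish by a termwise comparison against the expansion of $\ell$, noting that $a_i t^{i-1} \le a_i a_1^{i-1}$ with strict inequality in the leading term (since $a_d \ge 1$ and $t < a_1$), giving $0 < a_d t^{d-1} + \cdots + a_1 < \ell$.

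I do not anticipate any substantial obstacle; the argument is essentially a repackaging of Theorem~2.1 applied uniformly across the $n-1$ higher coordinates. The only subtlety is to confirm the polynomial factor is strictly positive so that the bound in (iii) genuinely places it in $(0,\ell)$: the hypothesis $\ell > a_1$ forces $d \ge 2$, and since the leading digit $a_d$ of the base-$a_1$ expansion is nonzero by definition, the lower bound is strict and coprimality with the prime $\ell$ follows at once.
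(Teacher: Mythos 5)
Your proof is correct and follows essentially the same route the paper intends: the paper gives no separate proof of this lemma, treating it as the coordinatewise extension of Theorem~2.1, and your argument is exactly that — the three $\ell$-coprimality checks on $a_k$, $t$, and the digit polynomial reproduce the Theorem~2.1 proof for each coordinate. If anything you are slightly more careful than the paper, in verifying that $d\ge 2$ and that the digit polynomial lies strictly in $(0,\ell)$ so that the prime $\ell$ genuinely survives in the denominator.
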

Now,Let $(a,b)\in\mathbb N^2$ and let $M$ be any integer coprime to $a$ and $b$ , such that prime factorization of $M$ can be represented as:  
\[
   M = p_1^{e_1}\,p_2^{e_2}\cdots p_r^{e_r}
\]
for distinct primes all exceeding $\max(a,b)$.  For each $i=1,\dots,r$, write the base‑$a$ digits of $p_i$ as
\[
   p_i = a_{i,d_i}a_{i,d_i-1}\cdots a_{i,1},
   \quad 0\le a_{i,j}<a,
\]
and define
\[
  \mathcal P_{i}(x)
  \;=\;
  \frac{b}{p_i\,a}\;x\;\bigl(a_{i,d_i}\,x^{d_i-1}+\cdots+a_{i,1}\bigr).
\]
Then the  curve
\[
  \widetilde{\mathcal P}(x)
  \;=\;
  \frac{1}{r}\sum_{i=1}^r\mathcal P_{i}(x)
\]
satisfies:
\begin{enumerate}
  \item $\widetilde{\mathcal P}(0)=0$ and $\widetilde{\mathcal P}(a)=b$.
  \item For each $1\le x\le a-1$, the denominator of $\widetilde{\mathcal P}(x)$ contains all primes $p_i$, so $\widetilde{\mathcal P}(x)\notin\mathbb Z$.\\
\end{enumerate}

For our  polynomial
\[
\mathcal{P}(x) := \frac{xb}{\ell a} \cdot (a_{d}x^{d-1}+a_{d-1}x^{d-2}+\cdots+a_{1}) = \sum_{i=1}^{d} \frac{b\,a_i}{\ell\,a} x^i.
\]
We analyze the Newton polygon of $\mathcal{P}(x)$ at the prime $p = \ell$.

Each coefficient is of the form
\[
c_i = \frac{b\,a_i}{\ell\,a} \quad \text{for } i = 1, 2, \dots, d.
\]
Since $\ell > \max\{a, b\}$ and $\ell$ is prime, we have $v_\ell(b) = v_\ell(a) = 0$, and $\ell \mid \ell$.

\begin{itemize}
    \item If $a_i \ne 0$, then $\ell \nmid b\,a_i$, so
    \[
    v_\ell(c_i) = v_\ell\left(\frac{b\,a_i}{\ell\,a}\right) = -1.
    \]
    \item If $a_i = 0$, then $c_i = 0$, and the term $x^i$ is absent.
\end{itemize}

Thus, for all $i$ such that $a_i \ne 0$, the point $(i,\,v_\ell(c_i)) = (i,\,-1)$ contributes to the Newton polygon.

Plot the points $(i,\,-1)$ for each $i$ such that $a_i \ne 0$. Since all such points lie at height $-1$, the Newton polygon consists of a single horizontal segment at level $y = -1$.\\

This represents the lower convex hull of the plotted points, which is a single horizontal segment from $(i_1, -1)$ to $(i_2, -1)$ where $i_1$ and $i_2$ are the smallest and largest indices such that $a_i \ne 0$.\\

 Since the slope is $0$, all roots of $\mathcal{P}(x)$ in $\mathbb{Q}_\ell$ have a valuation $-0 = 0$, i.e., they are $\ell$-adic \textit{units}.
 The absence of any break in the Newton polygon implies that $\mathcal{P}(x)$ does not factor non-trivially over $\mathbb{Q}_\ell$.\\

These classical constructions and Lemma 2.2 show that visibility can always be engineered: for any lattice point, one may design a polynomial whose graph passes through it and no other lattice point. However, such polynomials are highly tailored, depending on large primes and base expansions. They demonstrate the existence, but not the natural structure.

This motivates the central question of this paper: which points are visible for natural families of polynomials and their densities


\section{Polynomial to Point Visibility}
We return to our original problem and study visibility within the framework of a fixed polynomial family. Instead of working with straight lines through the origin, we consider curves of the form  
\[
\mathcal{F}(a_n,\dots,a_1)
=
\left\{
\, y = q\,(a_nx^n + a_{n-1}x^{n-1} + \cdots + a_1x)
\;\middle|\; q \in \mathbb{Q}^+
\right\},
\]
which is precisely the family
\[
y = m\,p(x),
\qquad
p(x) \in \mathbb{N}[x], \quad p(x) > 0 \text{ for } x>0.
\]
 A point \( (a,b)\in\mathbb{N}^2 \) is said to be \emph{visible} along this curve if no earlier lattice point \( (a',b') \) with \( a'<a \) lies on the same curve.  
This visibility condition is equivalent to requiring distinct transformed slopes:
\[
\frac{b'}{p(a')} \neq \frac{b}{p(a)} \quad \text{for all } a'<a.
\]
Equivalently, \( (a,b) \) is \(p\)-visible if and only if
\[
\forall\, a'<a,\qquad \frac{b\,p(a')}{p(a)}\notin\mathbb{Z}.
\]

We now try to capture this condition using a gcd condition.
\begin{definition}[Polynomial gcd]
For a polynomial \(P(x)\in\mathbb{Z}[x]\) with non-negative coefficients and a point \((a,b)\in\mathbb{N}^2\), define
\[
\gcd_{P}(a,b)
= \max\{d\in\mathbb{N}: d\mid P(a),\; d\mid b\}.
\]
\end{definition}

When \(P(x)=x\), this reduces to the classical visibility criterion \(\gcd(a,b)=1\).  
We generalize this idea to polynomial families.

\begin{lemma}
Let 
\[
P(x)=a_nx^n+a_{n-1}x^{n-1}+\cdots+a_1x,
\]
with all coefficients \(a_i\ge0\), \(n>1\), and \(\gcd(a_n,\dots,a_1)=1\).  
If \(\gcd_P(a,b)=1\), then \((a,b)\) is \(\mathcal{F}(a_n,\dots,a_1)\)-visible.
\end{lemma}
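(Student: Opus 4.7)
The plan is to unpack $\mathcal{F}$-visibility into a single divisibility condition and then kill it using Euclid's lemma together with the strict monotonicity of $P$ on $(0,\infty)$. First, observe that if $(a,b)\in\mathbb{N}^2$ lies on any curve $y=qP(x)$ with $q\in\mathbb{Q}^+$, then $q$ is forced to equal $b/P(a)$, so by Definition 1.2 the point $(a,b)$ is $\mathcal{F}(a_n,\dots,a_1)$-visible precisely when, for every integer $a'$ with $1\le a'<a$, the ordinate $qP(a')=bP(a')/P(a)$ is not a positive integer; equivalently, when $P(a)\nmid bP(a')$ for all such $a'$.

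Next I would argue by contradiction. Suppose $P(a)\mid bP(a')$ for some $1\le a'<a$. The hypothesis $\gcd_P(a,b)=1$ is exactly $\gcd(P(a),b)=1$, so Euclid's lemma forces $P(a)\mid P(a')$.

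The final step is to refute this divisibility via a size comparison. Since every $a_i\ge 0$ with $a_n>0$ and $n\ge 1$, the derivative $P'(x)=na_nx^{n-1}+(n-1)a_{n-1}x^{n-2}+\cdots+a_1$ is strictly positive on $(0,\infty)$, so $P$ is strictly increasing there; together with $P(1)\ge a_n\ge 1$ this gives $1\le P(a')<P(a)$, contradicting $P(a)\mid P(a')$.

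I do not foresee a genuine obstacle: the lemma is essentially the classical gcd visibility criterion pulled back along the substitution $x\mapsto P(x)$, and the argument follows the same schema. The only point that warrants explicit care is the monotonicity step, where the assumptions $a_n\ne 0$ and $a_i\ge 0$ are used in an essential way to guarantee $0<P(a')<P(a)$; without both, the terminal size comparison would fail and $P(a)\mid P(a')$ could no longer be excluded.
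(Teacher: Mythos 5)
Your proof is correct and takes essentially the same route as the paper's: both reduce $\mathcal{F}$-visibility to the non-divisibility $P(a)\nmid bP(a')$ for $1\le a'<a$, apply Euclid's lemma via $\gcd(P(a),b)=1$, and conclude from the size comparison $0<P(a')<P(a)$. Your write-up is simply more explicit than the paper's terse argument, in particular about why the nonnegativity of the coefficients forces $P$ to be strictly increasing on $(0,\infty)$.
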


\begin{proof}
Consider 
\[
\mathcal{F}(a_n,\dots,a_1)
=\left\{y=q\,(a_nx^n+\cdots+a_1x)\mid q\in\mathbb{Q}^+\right\}.
\]
If \((a,b)\) lies on such a curve, then 
\[
q=\frac{b}{P(a)}.
\]
If \(\gcd_P(a,b)=1\), then for any \(0<t<a\),
\[
\frac{b}{P(a)}P(t)
\]
cannot be an integer, since \(P(t)<P(a)\) and no prime divides both \(b\) and \(P(a)\).  
Thus, no lattice point lies between and \((a,b)\) is visible. \qed
\end{proof}

We recall the following lower bounds due to Chaubey, Pandey, and Regavim
\cite{Sneha} for the linear family \(P(x)=p x + q\).
These are stated without proof.

\begin{theorem}[Chaubey--Pandey--Regavim {\cite{Sneha}}]
If \(p\) and \(q\) are primes, then
\[
\mathsf{dens}(\mathcal{L}(p,q))
\ge
\mathcal{C}_{p,q},\qquad
\mathcal{C}_{p,q}
=\left(1-\frac1{p^2}\right)\left(1-\frac1{q^2}\right)
\prod_{\substack{r\ge5\\ r\text{ prime}}}
\left(1-\frac{2}{r^2}\right).
\]
\end{theorem}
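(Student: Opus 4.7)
My plan is to apply Lemma~3.2 to reduce the density bound for $\mathcal{L}(p,q)$ to a classical sieve estimate. By Lemma~3.2, every $(a,b) \in \mathbb{N}^2$ with $\gcd_P(a,b) = 1$ is $\mathcal{F}(p,q)$-visible, where $P(x) = px^2 + qx$. Hence
$$
\mathsf{dens}(\mathcal{L}(p,q)) \;\ge\; \mathsf{dens}\bigl\{(a,b) \in \mathbb{N}^2 : \gcd(b,\, P(a)) = 1\bigr\},
$$
so it suffices to bound the right-hand side from below by an Euler product.

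Next I would perform a prime-by-prime local analysis. For each prime $r$ define
$$
\delta_r \;:=\; \mathsf{dens}\{(a,b) : r \mid b,\ r \mid P(a)\} \;=\; \frac{N_r}{r^2},
$$
where $N_r = \#\{a \bmod r : P(a) \equiv 0 \pmod r\}$. Since $P(x) = x(px+q)$, three cases arise: for $r = p \ne q$, $P(x) \equiv qx \pmod r$ has the unique root $x \equiv 0$; for $r = q \ne p$, $P(x) \equiv p x^2 \pmod r$ again has the unique root $x \equiv 0$ because $\gcd(p,r)=1$; and for $r \notin \{p,q\}$ the roots $x \equiv 0$ and $x \equiv -q\,p^{-1} \pmod r$ are distinct. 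Thus $\delta_p = 1/p^2$, $\delta_q = 1/q^2$, and $\delta_r = 2/r^2$ otherwise, matching exactly the factors appearing in $\mathcal{C}_{p,q}$.

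I would then assemble the local data into a global estimate via Möbius inversion: starting from $[\gcd(b, P(a)) = 1] = \sum_{d \mid \gcd(b,P(a))} \mu(d)$, I exchange summation and use the Chinese Remainder Theorem to factor the joint density across the primes dividing $d$. The result is the infinite product
$$
\prod_{r \text{ prime}} (1 - \delta_r) \;=\; \left(1 - \tfrac{1}{p^2}\right)\left(1 - \tfrac{1}{q^2}\right)\prod_{r \notin \{p,q\}}\left(1 - \tfrac{2}{r^2}\right),
$$
which rearranges to $\mathcal{C}_{p,q}$; the restriction $r \ge 5$ in the stated constant reflects that factors with $r \in \{2,3\}$ are either absorbed into $(1 - 1/p^2)(1 - 1/q^2)$ when $\{p,q\} \cap \{2,3\} \ne \varnothing$, or dropped by monotonicity $1 - 2/r^2 \le 1$ so that the expression remains a valid lower bound.

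The hard part is the rigorous truncation of the Möbius sum. Because $P(a) = O(a^2)$, the divisors $d \mid \gcd(b, P(a))$ can reach size $O(N^2)$, so one must show that the tail $\sum_{d > D} \mu(d)\,\#\{(a,b) \le N : d \mid b,\ d \mid P(a)\}$ contributes $o(N^2)$ uniformly. For each squarefree $d$, the inner count is approximately $(N/d)\cdot(\omega_P(d)\,N/d)$, where $\omega_P(d)$ is the multiplicative function counting roots of $P$ modulo $d$; one therefore needs $\sum_{d>D}\omega_P(d)/d^2 \to 0$, which follows from $\omega_P(d) \ll d^{\varepsilon}$ together with a Selberg or Rosser fundamental-lemma–type sieve argument. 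Executing this truncation cleanly, and in particular bounding the error contributions of primes $r$ near the sieve level, is the main technical content of the proof.
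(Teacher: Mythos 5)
First, a point of reference: the paper does not prove this statement at all --- it is explicitly ``stated without proof'' and imported from Chaubey--Pandey--Regavim \cite{Sneha} --- so your proposal can only be judged on its own merits, not against an internal argument. On those merits there is a genuine gap, and it sits exactly where you flag the constant. Your reduction via Lemma~3.2 and your local root counts for $P(x)=x(px+q)$ are correct, and the Euler product you obtain,
\[
S=\Bigl(1-\tfrac1{p^2}\Bigr)\Bigl(1-\tfrac1{q^2}\Bigr)\prod_{r\ne p,q}\Bigl(1-\tfrac{2}{r^2}\Bigr),
\]
is indeed a valid lower bound for $\mathsf{dens}\{(a,b):\gcd(b,P(a))=1\}$. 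But $S$ does not ``rearrange to'' $\mathcal{C}_{p,q}$: unless $\{p,q\}=\{2,3\}$, the product defining $S$ contains the factors $1-2/2^2=\tfrac12$ and/or $1-2/3^2=\tfrac79$, which are absent from $\prod_{r\ge5}(1-2/r^2)$, so $S<\mathcal{C}_{p,q}$. Your closing justification --- that these factors may be ``dropped by monotonicity $1-2/r^2\le1$ so that the expression remains a valid lower bound'' --- runs in the wrong direction: deleting factors smaller than $1$ \emph{enlarges} the product, so from $\mathsf{dens}(\mathcal{L}(p,q))\ge S$ you cannot conclude $\mathsf{dens}(\mathcal{L}(p,q))\ge\mathcal{C}_{p,q}$ when $\mathcal{C}_{p,q}>S$. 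The chain of inequalities simply stops short of the stated constant.

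The missing idea is that $\gcd(b,P(a))=1$ oversieves at the small primes, and one must instead use the sharper sufficient condition of the paper's Definition~3.6 / Theorem~3.7: $b$ only needs to avoid the moduli $d_t=P(a)/\gcd(P(a),P(t))$, and a prime $r$ divides some $d_t$ only when $v_r(P(a))$ exceeds $\min_{t<a}v_r(P(t))$. For example, when $p$ and $q$ are both odd, $P(t)=t(pt+q)\equiv t(t+1)\pmod 2$ is even for \emph{every} $t$, so $\gcd(b,P(a))=1$ forces $b$ odd and costs you the factor $\tfrac12$; yet for all $a$ with $v_2(P(a))=1$ no $d_t$ is even and $b$ may be even without destroying visibility. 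It is precisely this relaxation at $r=2$ and $r=3$ that lifts the constant from $S$ up to $\mathcal{C}_{p,q}$, and your argument as written cannot capture it. (The truncation/fundamental-lemma issues you raise in the last paragraph are real but routine; the constant mismatch is the substantive obstruction.)
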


\begin{theorem}[Chaubey--Pandey--Regavim {\cite{Sneha}}]
If \(\gcd(p,q)=1\), then
\[
\mathsf{dens}(\mathcal{L}(p,q))
\ge
\mathcal{C}_{p,q}^*,\qquad
\mathcal{C}_{p,q}^*
=\prod_{r\mid pq}\left(1-\frac{1}{r^2}\right)
\prod_{r\nmid pq}\left(1-\frac{2}{r^2}\right).
\]
\end{theorem}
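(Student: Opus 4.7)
The plan is to combine the sufficient visibility criterion of Lemma~3.2 with a standard Möbius-inversion density computation in the spirit of Sylvester's derivation of $\zeta(2)^{-1}$. In the notation of Definition~1.1, the family $\mathcal{F}(p,q)$ is generated by the polynomial $P(x)=px^{2}+qx=x(px+q)$, so Lemma~3.2 guarantees that every pair $(a,b)\in\mathbb{N}^{2}$ with $\gcd\bigl(a(pa+q),\,b\bigr)=1$ is $\mathcal{F}(p,q)$-visible. Writing
\[
\mathcal{V}(p,q):=\bigl\{(a,b)\in\mathbb{N}^{2}:\gcd(a(pa+q),b)=1\bigr\}\subseteq\mathcal{L}(p,q),
\]
it suffices to show $\mathsf{dens}(\mathcal{V}(p,q))=\mathcal{C}_{p,q}^{*}$.

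First, I would expand the indicator $[\gcd(a(pa+q),b)=1]=\sum_{d\mid\gcd(a(pa+q),b)}\mu(d)$ and swap sums to get
\[
\#\bigl(\mathcal{V}(p,q)\cap R_{N}\bigr)=\sum_{d\ge 1}\mu(d)\,A(d;N)\,\lfloor N/d\rfloor,\qquad A(d;N):=\#\{1\le a\le N:d\mid a(pa+q)\}.
\]
For squarefree $d$, the Chinese remainder theorem gives $A(d;N)=(N/d)\,\rho(d)+O(\rho(d))$, where $\rho$ is the multiplicative function with $\rho(r)=\#\{a\bmod r:r\mid a(pa+q)\}$.

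The decisive step is then the local evaluation of $\rho(r)$. If $r\mid pq$, then $\gcd(p,q)=1$ forces exactly one of $r\mid p$ or $r\mid q$, and in either case $r\mid a(pa+q)$ reduces to the single condition $r\mid a$, so $\rho(r)=1$. If $r\nmid pq$, then the congruences $a\equiv 0\pmod{r}$ and $a\equiv -q\,p^{-1}\pmod{r}$ are distinct residues, so $\rho(r)=2$. These two cases match precisely the two products defining $\mathcal{C}_{p,q}^{*}$. By multiplicativity,
\[
\lim_{N\to\infty}\frac{\#(\mathcal{V}(p,q)\cap R_{N})}{N^{2}}=\sum_{d\ge 1}\frac{\mu(d)\,\rho(d)}{d^{2}}=\prod_{r\text{ prime}}\Bigl(1-\frac{\rho(r)}{r^{2}}\Bigr)=\mathcal{C}_{p,q}^{*},
\]
which combined with $\mathcal{V}(p,q)\subseteq\mathcal{L}(p,q)$ yields the lower bound.

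The main obstacle will be the uniform error control in the Möbius sum. Since $\rho(d)\le 2^{\omega(d)}$, a coarse Dirichlet-series manipulation does not suffice; one truncates at $d\le D=N^{1-\varepsilon}$, bounds the tail $\sum_{d>D}\rho(d)/d^{2}$ using $\rho(d)\ll d^{\varepsilon}$, and checks that the cumulative arithmetic-progression error $\sum_{d\le D}\rho(d)\ll D(\log D)^{C}$ contributes $o(N^{2})$ after multiplication by $\lfloor N/d\rfloor$. This truncation step is routine but is the one place where more than elementary manipulation is needed; everything else is dictated by Lemma~3.2 and the local prime-by-prime count above.
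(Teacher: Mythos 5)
The paper does not actually prove this statement: Theorems~3.3 and~3.4 are quoted from Chaubey--Pandey--Regavim and explicitly ``stated without proof,'' so there is no in-paper argument to compare against. Your proposal is nonetheless sound and follows what is essentially the standard route: reduce to the coprimality set via Lemma~3.2 (whose hypothesis $\gcd_P(a,b)=\gcd(a(pa+q),b)=1$ you invoke correctly), then compute the density of that set by M\"obius inversion. Your local computation of $\rho(r)$ for $P(x)=x(px+q)$ is correct and is exactly what reconciles the two products in $\mathcal{C}^*_{p,q}$: when $r\mid pq$, the hypothesis $\gcd(p,q)=1$ forces the congruence to collapse to the single root $a\equiv 0$, and otherwise there are two distinct roots. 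Note that the resulting constant $\prod_r(1-\rho(r)/r^2)$ is the same $\mathcal{C}_P$ the paper produces only heuristically in Theorem~3.8, so your argument in effect supplies a rigorous version of a computation the paper elsewhere just sketches. The one step you should tighten is the tail of the M\"obius sum: for $d$ comparable to $P(N)\asymp N^2$ the approximation $A(d;N)\approx N\rho(d)/d$ fails, so the bound $\rho(d)\ll d^{\varepsilon}$ alone does not control $\sum_{d>D}A(d;N)\lfloor N/d\rfloor$. The clean fix is to bound the discarded pairs by
\[
\sum_{a\le N}\ \sum_{\substack{d\mid P(a)\\ d>D}}\frac{N}{d}\ \le\ \frac{N}{D}\sum_{a\le N}\tau\bigl(P(a)\bigr)\ \ll\ \frac{N^{2}(\log N)^{O(1)}}{D},
\]
which is $o(N^2)$ for $D=N^{1-\varepsilon}$ by the classical divisor-sum estimate for values of a fixed polynomial. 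With that substitution in place of the $d^{\varepsilon}$ bound, your outline closes completely.
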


\begin{definition}
Let \(P(x)\in\mathbb{Z}[x]\) be a nonconstant polynomial of content~1.  
For fixed \(a\in\mathbb{N}\), define for \(1\le t<a\),
\[
d_t=\frac{P(a)}{\gcd(P(a),P(t))},
\qquad
\mathscr{L}_P(a)=\operatorname{lcm}\{d_t:1\le t<a\}.
\]
\end{definition}

\begin{theorem}
If \(\gcd(b,\mathscr{L}_P(a))=1\), then
\[
\frac{b\,P(t)}{P(a)}\notin\mathbb{Z}
\quad \text{for all } t<a.
\]
Hence \((a,b)\) is \(P\)-visible.
\end{theorem}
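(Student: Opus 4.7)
The plan is to translate the integrality of $bP(t)/P(a)$ into a pure divisibility statement about $d_t$, and then exploit the coprimality hypothesis through the lcm. For the first step, fix an integer $t$ with $1\le t<a$ and let $g_t=\gcd(P(a),P(t))$, so that $P(a)=g_t\,d_t$ and $P(t)=g_t\,e_t$ with $\gcd(d_t,e_t)=1$. Then
\[
\frac{b\,P(t)}{P(a)}\;=\;\frac{b\,e_t}{d_t},
\]
and since $\gcd(d_t,e_t)=1$, this rational number is an integer if and only if $d_t\mid b$. Thus $P$-visibility of $(a,b)$ reduces to showing $d_t\nmid b$ for every $1\le t<a$.

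For the second step, note that $d_t\mid\mathscr{L}_P(a)$ by the very definition of the lcm, so the hypothesis $\gcd(b,\mathscr{L}_P(a))=1$ immediately gives $\gcd(b,d_t)=1$. Consequently, $d_t\mid b$ would force $d_t=1$. It remains to rule out $d_t=1$, which is equivalent to $P(a)\mid P(t)$. Here I would invoke the standing hypotheses on the family $\mathcal{F}$: $P(x)=a_nx^n+\cdots+a_1x$ has non-negative coefficients, no constant term, and is nonconstant, so $P'(x)>0$ on $(0,\infty)$ and $P$ is strictly increasing there. Hence $0<P(t)<P(a)$ for $1\le t<a$, giving $d_t=P(a)/g_t\ge 2$, which contradicts $d_t=1$. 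Combining the two steps shows $bP(t)/P(a)\notin\mathbb{Z}$ for every $1\le t<a$; this is precisely the $P$-visibility condition recorded just before Definition~2.7.

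I do not anticipate any real obstacle in executing this plan: the argument is essentially a polynomial-valued repackaging of the classical line-visibility criterion $\gcd(a,b)=1$, with the polynomial content absorbed into the $d_t$. The only mildly delicate point is verifying $d_t\ge 2$, which uses the monotonicity of $P$ on the positive reals.
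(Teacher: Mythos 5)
Your proposal is correct and follows essentially the same route as the paper's proof: reduce integrality of $b\,P(t)/P(a)$ to the divisibility $d_t\mid b$, note $d_t\mid\mathscr{L}_P(a)$ so the coprimality hypothesis forces $d_t=1$, and rule that out via $0<P(t)<P(a)$. Your write-up is if anything slightly more careful, since you make explicit the coprimality $\gcd(d_t,e_t)=1$ behind the step ``$P(a)\mid b\,P(t)\Rightarrow d_t\mid b$'' and justify $P(t)<P(a)$ by monotonicity, both of which the paper leaves implicit.
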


\begin{proof}
We prove by contradiction.  Suppose
\(\gcd(b,\mathscr{L}_P(a))=1\), but that for some \(t\) with \(1\le t<a\),
\[
\frac{b\,P(t)}{P(a)} \;=\; m \;\in\;\mathbb{Z}.
\]
Then \(P(a)\mid b\,P(t)\), so
\[
d_t \;=\;\frac{P(a)}{\gcd(P(a),P(t))}
\;\Bigm|\;b.
\]
Since \(d_t\) divides the least common multiple \(\mathscr{L}_P(a)\), it follows that
\[
d_t \;\bigm|\;\mathscr{L}_P(a)
\quad\Longrightarrow\quad
\gcd\bigl(b,\mathscr{L}_P(a)\bigr)\;\ge\;d_t\;>\;1,
\]
where the strict inequality holds because \(P(t)<P(a)\) for \(t<a\)
and hence \(\gcd(P(a),P(t))<P(a)\), forcing \(d_t>1\).  This contradicts
our assumption \(\gcd(b,\mathscr{L}_P(a))=1\).  Therefore no such \(t\) can exist,
and
\(\tfrac{b\,P(t)}{P(a)}\notin\mathbb{Z}\) for all \(1\le t<a\).

\end{proof}

It is evident that $\gcd(b,\mathscr{L}_P(a))=1 \implies \gcd_P(a,b)=1$ as , If a prime $p$ divides $\mathscr{L}_P(a)$, then there exists $t<a$ such that
\[p\mid\frac{P(a)}{\gcd(P(a),P(t))},\]
i.e.\ $p\mid P(a)$ but $p\nmid P(t)$.  Hence
\[
\{\text{primes}\mid \mathscr{L}_P(a)\}\subseteq\{\text{primes}\mid P(a)\},
\]
which gives
\[
\gcd\bigl(b,\mathscr{L}_P(a)\bigr)=1\quad\Longrightarrow\quad\gcd\bigl(b,P(a)\bigr)=1.
\].\\

Given this LCM condition, we now try to get a bound for which 
 we are interested in the number of integer pairs \( (a,b) \) with \( 1 \leq a,b \leq N \) such that \( \gcd(b, D_P(a)) = 1 \), i.e.
\[
\mathscr{E}_P(N) := \#\left\{ (a,b) \in \mathbb{N}^{2} , 0<a, b \leq N \;:\; \gcd(b, \mathscr{L}_P(a)) = 1 \right\}.
\]
\begin{theorem}
Let 
\[
\rho_P(p)=\#\{x\bmod p: P(x)\equiv0\pmod p\}.
\]
Then as \(N\to\infty\),
\[
\boxed{
\mathscr{E}_P(N)
=
\mathcal{C}_P\,N^2+O(N\log N)
}
\]
where
\[
\boxed{
\mathcal{C}_P
=
\prod_{p\text{ prime}}
\left(1-\frac{\rho_P(p)}{p^2}\right).
}
\]
\end{theorem}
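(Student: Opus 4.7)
The plan is to reduce $\mathscr{E}_P(N)$ to the companion count
\[
\mathscr{E}'_P(N) := \#\{(a,b)\in[1,N]^2 : \gcd(b,P(a))=1\},
\]
evaluate $\mathscr{E}'_P(N)$ by Möbius inversion together with the Chinese Remainder Theorem, and absorb the small discrepancy between the two counts into the error term.

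First I would show $\mathscr{E}_P(N)=\mathscr{E}'_P(N)+O(N\log N)$. The inclusion $\{\gcd(b,P(a))=1\}\subseteq\{\gcd(b,\mathscr{L}_P(a))=1\}$ is immediate because every prime factor of $\mathscr{L}_P(a)$ divides $P(a)$. For the reverse inclusion up to a controlled error, note that whenever $p\mid P(a)$ and $\rho_P(p)<p$, any complete residue system modulo $p$ contains a non-root of $P$; so once $a\ge p$ there exists $t<a$ with $v_p(P(t))=0<v_p(P(a))$, forcing $p\mid d_t\mid\mathscr{L}_P(a)$. The only primes dividing $P(a)$ that can fail to divide $\mathscr{L}_P(a)$ are therefore those in the finite fixed-divisor set $\{p:\rho_P(p)=p\}$, together with primes $p>a$ dividing $P(a)$. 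Since $P(a)\ll a^n$ has $O(\log a)$ prime divisors in total, summing over $a\le N$ and multiplying by the trivial $b$-count yields $O(N\log N)$ exceptional pairs.

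Next I would evaluate $\mathscr{E}'_P(N)$ by a standard sieve. Expanding the coprimality indicator as $\mathbf{1}[\gcd(b,P(a))=1]=\sum_{d\mid\gcd(b,P(a))}\mu(d)$, swapping the order of summation, and using $\#\{b\le N:d\mid b\}=\lfloor N/d\rfloor$, I obtain
\[
\mathscr{E}'_P(N)=\sum_{d=1}^{N}\mu(d)\Bigl\lfloor\tfrac{N}{d}\Bigr\rfloor S_P(d,N),\qquad S_P(d,N):=\#\{a\le N:d\mid P(a)\}.
\]
By the Chinese Remainder Theorem applied prime-by-prime to squarefree $d$, one has $S_P(d,N)=\tfrac{N}{d}\rho_P(d)+O(\rho_P(d))$, where $\rho_P$ is extended multiplicatively via $\rho_P(d)=\prod_{p\mid d}\rho_P(p)$. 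Plugging in, writing $\lfloor N/d\rfloor=N/d+O(1)$, and extending the $d$-summation to infinity produces the main term $N^2\sum_{d\ge1}\mu(d)\rho_P(d)/d^2$, which factors as the Euler product $\prod_p(1-\rho_P(p)/p^2)=\mathcal{C}_P$.

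The error is the sum of three contributions: truncating the Möbius series at $d\le N$, the floor approximation $\lfloor N/d\rfloor=N/d+O(1)$, and the $O(\rho_P(d))$ CRT remainder; using the uniform bound $\rho_P(p)\le\deg P$ for all but finitely many primes, each contribution collapses to $O(N\log N)$ and is absorbed into the stated bound. The main obstacle I expect is the first step: the passage from the LCM-based $\mathscr{L}_P(a)$ to the value $P(a)$ itself. Making the exceptional-prime count precise requires both the fixed-divisor observation and a uniform bound on large prime factors of $P(a)$, and it is only because such large factors are rare that the discrepancy stays within the target error. The Möbius inversion and the Euler-product manipulation are routine sieve bookkeeping; the novelty here lies in reducing the LCM condition to the simpler multiplicative condition $\gcd(b,P(a))=1$ that the Euler product is built to handle.
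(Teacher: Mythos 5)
Your route is genuinely different from the paper's: the paper's proof is a two-step heuristic (it asserts that the density of $a$ with $p\mid\mathscr{L}_P(a)$ is ``approximately $\rho_P(p)/p$'' and then multiplies local factors ``using independence''), whereas you reduce to the multiplicative condition $\gcd(b,P(a))=1$ and run an honest M\"obius/CRT sieve. The sieve half of your argument is sound in outline, though the error you actually obtain is $O\bigl(N(\log N)^{\deg P}\bigr)$ rather than $O(N\log N)$, since $\sum_{d\le N}\mu^2(d)\rho_P(d)/d\ll(\log N)^{\deg P}$; that part is cosmetic.

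The genuine gap is in your first step, and it cannot be repaired as stated. For a fixed-divisor prime $p$ (one with $\rho_P(p)=p$) you correctly note that $p\mid P(a)$ for every $a$ while $p$ need not divide $\mathscr{L}_P(a)$, but the resulting discrepancy is $\Theta(N^2)$, not $O(N\log N)$: the exceptional $a$ form a positive-density set, and for each such $a$ a full $N/p$ of the values $b\le N$ are counted by $\mathscr{E}_P(N)$ but not by your $\mathscr{E}'_P(N)$. Concretely, take $P(x)=x^2+x$, which is admissible (content $1$). Since $v_2(P(1))=1$ and $v_2(P(t))\ge 1$ for all $t$, one finds $v_2(\mathscr{L}_P(a))=v_2(P(a))-1$, so $2\mid\mathscr{L}_P(a)$ exactly when $a\equiv 0,3\pmod 4$, a set of density $\tfrac12$ rather than the density $\rho_P(2)/2=1$ that your reduction (and the paper's heuristic) would require. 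The local factor of $\lim_N\mathscr{E}_P(N)/N^2$ at $2$ is therefore $\tfrac12\cdot\tfrac12+\tfrac12\cdot 1=\tfrac34$, whereas $1-\rho_P(2)/2^2=\tfrac12$. So not only does the reduction fail at fixed-divisor primes; the boxed constant $\mathcal{C}_P$ is itself wrong for this $P$, and no argument can close the gap without either assuming $\rho_P(p)<p$ for all $p$ or replacing the Euler factors at the finitely many primes where this fails. (The paper's own proof makes the same false step implicitly, so your attempt is the more transparent of the two, but as a proof of the stated theorem it does not go through.)
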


\begin{proof}
A prime \( p \) divides \( \mathscr{L}_P(a) \) if and only if
\[
\exists \; 1 \leq t < a \text{ such that } v_p(P(a)) > v_p(P(t)).
\]
In particular, if \( P(x) \equiv 0 \pmod{p} \) has a solution \( x \equiv t \pmod{p} \), then for many values of \( a \), it forces \( p \mid \mathscr{L}_P(a) \). Thus, the density of such \( a \) where \( p \mid \mathscr{L}_P(a) \) is approximately \( \rho_P(p)/p \).

\medskip

For a fixed \( a \), the number of \( 1 \le b \le N \) such that \( \gcd(b, \mathscr{L}_P(a)) = 1 \) is
\[
\phi_{\mathscr{L}_P(a)}(N) := \#\{ 1 \le b \le N \;:\; \gcd(b, \mathscr{L}_P(a)) = 1 \} = N \prod_{p \mid \mathscr{L}_P(a)} \left(1 - \frac{1}{p} \right) + O(\tau(\mathscr{L}_P(a))),
\]
where \( \tau \) is the divisor function. Averaging over \( a \) and using independence of divisibility across primes gives
\[
\frac{\mathscr{E}_P(N)}{N^2} \approx \prod_{p} \left[ \left(1 - \frac{\rho_P(p)}{p} \right)\cdot 1 + \frac{\rho_P(p)}{p}\cdot \left(1 - \frac{1}{p} \right) \right] = \prod_{p} \left(1 - \frac{\rho_P(p)}{p^2} \right).
\]

\medskip
\end{proof}
so $\mathsf{dens}(\mathcal{L}(a,b)) \ge \mathcal{C}_{p}.$\\

This constant is stricter than the bounds established by Sneha Chaubey and Ashish Kumar Pandey~\cite{Sneha} for the density of visible lattice points for a family of polynomial, as it reflects the root structure of \(P\) modulo all primes.

Until now we were focused on the lower bound of $\mathsf{dens}(\mathcal{L}(a,b))$ , now we give an exact result for  $\mathsf{dens}(\mathcal{L}(a,b))$ and a possible extension for a general polynomial.\\

\begin{theorem}
Let \(N>1\) and \(s,r\in\mathbb{N}\).  
The number of pairs \((a,b)\in\{1,\dots,N\}^2\) such that
\[
\frac{b\,(s\,t^2+r\,t)}{s\,a^2+r\,a}\notin\mathbb{Z}
\quad (1\le t<a)
\]
equals
\[
\sum_{a=1}^N
\sum_{J\subseteq\{1,\dots,a-1\}}
(-1)^{|J|}
\left\lfloor
\frac{N}{
\mathrm{lcm}(m_{a,t}:t\in J)
}
\right\rfloor,
\]
where
\[
m_{a,t}
=\frac{a\,(s\,a+r)}{
\gcd(a\,(s\,a+r),\,t\,(s\,t+r))
}.
\]
\end{theorem}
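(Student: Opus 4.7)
The plan is to reduce the problem, for each fixed $a$, to an inclusion--exclusion count over the $a-1$ divisibility conditions indexed by $t\in\{1,\dots,a-1\}$, and then to sum over $a$. First I would translate the condition $\tfrac{b\,P(t)}{P(a)}\in\mathbb{Z}$, where $P(x)=sx^2+rx$, into a single divisibility statement on $b$. Writing $g_{a,t}=\gcd(P(a),P(t))$ and factoring out the gcd, the condition $P(a)\mid b\,P(t)$ becomes $\frac{P(a)}{g_{a,t}}\,\Bigm|\,b\cdot\frac{P(t)}{g_{a,t}}$; since the two reduced factors are coprime by construction, this is equivalent to $m_{a,t}\mid b$ with $m_{a,t}=\frac{a(sa+r)}{\gcd(a(sa+r),\,t(st+r))}$. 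This is exactly the quantity appearing in the stated formula, so the work amounts to a clean ``bad set'' description.

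Next, fix $a$ and define, for each $t\in\{1,\dots,a-1\}$, the set $A_t=\{b\in\{1,\dots,N\}:m_{a,t}\mid b\}$. The $b$-values contributing to $a$ in the theorem are precisely those in $\bigcap_{t=1}^{a-1}A_t^{c}$. Standard inclusion--exclusion gives
\[
\Bigl|\bigcap_{t=1}^{a-1}A_t^{c}\Bigr|
=\sum_{J\subseteq\{1,\dots,a-1\}}(-1)^{|J|}\Bigl|\bigcap_{t\in J}A_t\Bigr|,
\]
with the convention that the empty intersection is all of $\{1,\dots,N\}$, contributing $N=\lfloor N/1\rfloor$ for $J=\emptyset$. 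The intersection $\bigcap_{t\in J}A_t$ consists of the common multiples of the $m_{a,t}$, $t\in J$, so its cardinality is exactly $\lfloor N/\mathrm{lcm}(m_{a,t}:t\in J)\rfloor$. Summing over $a\in\{1,\dots,N\}$ then yields the double sum in the statement.

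The only step that requires care is the equivalence $P(a)\mid b\,P(t)\iff m_{a,t}\mid b$: one must verify the coprimality $\gcd\bigl(\tfrac{P(a)}{g_{a,t}},\tfrac{P(t)}{g_{a,t}}\bigr)=1$, which is immediate from the definition of the gcd, and then invoke Euclid's lemma to strip $\tfrac{P(t)}{g_{a,t}}$ from the divisibility. Everything else is formal: the inclusion--exclusion is finite (at most $2^{a-1}$ terms per $a$), and the passage from intersections of divisibility classes to divisibility by their lcm is standard. Thus the theorem will follow once the $m_{a,t}$-reformulation is established; the remainder is bookkeeping.
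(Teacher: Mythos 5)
Your proposal is correct and follows essentially the same route as the paper: reduce the condition $P(a)\mid b\,P(t)$ to the single divisibility $m_{a,t}\mid b$, apply inclusion--exclusion over the index set $\{1,\dots,a-1\}$ for each fixed $a$, count each intersection as $\lfloor N/\mathrm{lcm}\rfloor$, and sum over $a$. The only difference is that you explicitly justify the equivalence $P(a)\mid b\,P(t)\iff m_{a,t}\mid b$ via the coprimality of the reduced factors and Euclid's lemma, a step the paper asserts without comment.
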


\begin{proof}
  For each \(a=1,2,\dots,N\), set
    \[
      D_a = s\,a^2 + r\,a = a\,(s\,a+r).
    \]
  
    For each \(1\le t<a\), write
    \[
      k_{a,t} = t\,(s\,t + r).
    \]
    Then
    \[
      \frac{b\,k_{a,t}}{D_a}\in\mathbb{Z}
      \quad\Longleftrightarrow\quad
      D_a\;\Bigm\vert\;b\,k_{a,t}
      \quad\Longleftrightarrow\quad
      \frac{D_a}{\gcd(D_a,k_{a,t})}\;\Bigm\vert\;b.
    \]
    Define
    \[
      m_{a,t} \;=\;\frac{D_a}{\gcd(D_a,k_{a,t})}
      \;=\;
      \frac{a\,(s\,a+r)}{\gcd\bigl(a\,(s\,a+r),\,t\,(s\,t+r)\bigr)}.
    \]
    Hence, for each fixed \(t\), the “bad” values of \(b\) (making the fraction integer at that \(t\)) are exactly the multiples of \(m_{a,t}\).\\

      We wish to count those \(b\in\{1,\dots,N\}\) that avoid \emph{all} the divisibilities
    \[
      m_{a,1}\mid b,\;
      m_{a,2}\mid b,\;
      \dots,\;
      m_{a,a-1}\mid b.
    \]
    By the principle of inclusion–exclusion, the number of such “good” \(b\) for a fixed \(a\) is
    \[
      \sum_{J\subseteq\{1,\dots,a-1\}}
      (-1)^{|J|}
      \left\lfloor
        \frac{N}
              {\mathrm{lcm}\bigl(m_{a,t}:t\in J\bigr)}
      \right\rfloor,
    \]
    where we interpret \(\mathrm{lcm}(\emptyset)=1\) so that the term \(J=\varnothing\) contributes \(\lfloor(N)/1\rfloor=N\).

      Finally, we sum the above count over \(a=1,2,\dots, N-1\) to obtain the stated double sum.

This completes the proof. $\square$ \\
\end{proof}

\begin{lemma}
    
Let  
\[
P(x)=a_nx^n+a_{n-1}x^{n-1}+\cdots+a_1x,\qquad a_i\ge0.
\]
For \(0<t<a<N+1\), define
\[
m_{a,t}=\frac{P(a)}{\gcd(P(a),P(t))}.
\]
Then
\[
\mathsf{dens}(\mathcal{L}(a_n,\dots,a_1))
=
\lim_{N\to\infty}
\frac{
\sum_{a=1}^{N}
\sum_{J\subseteq\{1,\dots,a-1\}}
(-1)^{|J|}
\left\lfloor
\frac{N}{\mathrm{lcm}(m_{a,t}:t\in J)}
\right\rfloor
}{N^2}.
\]
\end{lemma}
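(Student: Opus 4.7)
The plan is to show that for every $N$ the numerator on the right-hand side equals $\lvert\mathcal{L}(a_n,\dots,a_1)\cap R_N\rvert$ exactly; dividing by $N^2$ and invoking Definition~1.3 then delivers the identity. Structurally this is a direct generalisation of Theorem~3.6 from the quadratic $sx^2+rx$ to an arbitrary $P(x)=a_nx^n+\cdots+a_1x$, and the same gcd manipulation drives the argument.

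First, I would translate visibility into a divisibility condition on $b$. By the equivalence recorded at the top of Section~3, $(a,b)$ is $\mathcal{F}(a_n,\dots,a_1)$-visible precisely when $bP(t)/P(a)\notin\mathbb{Z}$ for every $1\le t<a$. Since $P(a)\mid bP(t)$ is equivalent to $m_{a,t}\mid b$, visibility at the first coordinate $a$ amounts to the simultaneous nondivisibility $m_{a,t}\nmid b$ for all $1\le t<a$. Observe that $m_{a,t}>1$ always, because $P$ is strictly increasing on positive integers (as $a_n\ne 0$ and all $a_i\ge 0$), forcing $\gcd(P(a),P(t))\le P(t)<P(a)$.

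Next, I fix $a\in\{1,\dots,N\}$ and apply inclusion--exclusion over the $a-1$ divisibility conditions $m_{a,t}\mid b$. For any subset $J\subseteq\{1,\dots,a-1\}$, the number of $b\in\{1,\dots,N\}$ divisible by every $m_{a,t}$ with $t\in J$ equals $\lfloor N/\mathrm{lcm}(m_{a,t}:t\in J)\rfloor$, with $\mathrm{lcm}(\varnothing)=1$ producing the leading term $N$ from $J=\varnothing$. The alternating sum over $J$ is exactly the inner expression in the lemma, and summing over $a=1,\dots,N$ partitions $\mathcal{L}(a_n,\dots,a_1)\cap R_N$ by first coordinate, giving its cardinality on the nose.

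Dividing by $N^2$ and passing to the limit is Definition~1.3. No genuine obstacle appears: the only step requiring care is the translation of $bP(t)/P(a)\in\mathbb{Z}$ into $m_{a,t}\mid b$, which is the same clean gcd manipulation used in the proof of Theorem~3.6. Unlike Theorem~3.5, this lemma asserts an exact identity of counting functions \emph{before} any limit is taken, so the convergence question is inherited from the definition of density itself rather than arising as a new analytic difficulty; if one wished to prove existence of the limit in general, the natural route would be to combine this exact formula with estimates in the spirit of Theorem~3.5.
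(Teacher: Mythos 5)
Your argument is correct and is exactly the route the paper intends: the lemma is stated without a separate proof because it is the verbatim generalisation of the inclusion--exclusion count in Theorem~3.6/3.7 from $sx^2+rx$ to a general $P$, which is precisely what you carry out (translate visibility to $m_{a,t}\nmid b$, sieve over $b$ for each fixed $a$, sum over $a$, divide by $N^2$). Your added observations that $m_{a,t}>1$ and that existence of the limit is deferred to the density definition are consistent with the paper, which likewise relegates convergence (and the value of the limit) to the cited work of Chaubey--Pandey.
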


\begin{corollary}
\[
\mathsf{dens}(\mathcal{L}(a_n,\dots,a_1))=1.
\]
\end{corollary}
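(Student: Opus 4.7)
The plan is to invoke the inclusion–exclusion formula from the preceding lemma and extract its principal term. The choice $J=\varnothing$ gives $\lfloor N/1\rfloor=N$ for each $a$, so its aggregate contribution is $\sum_{a=1}^{N}N=N^{2}$, which already yields density $1$ after division by $N^{2}$. The corollary therefore reduces to showing that the remainder
$$T(N)\;=\;\sum_{a=1}^N\sum_{\varnothing\neq J\subseteq\{1,\dots,a-1\}}(-1)^{|J|}\left\lfloor\frac{N}{\mathrm{lcm}\bigl(m_{a,t}:t\in J\bigr)}\right\rfloor$$
satisfies $T(N)=o(N^{2})$.

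First I would apply the union bound
$$|T(N)|\;\le\;\sum_{a=1}^{N}\sum_{t=1}^{a-1}\left\lfloor\frac{N}{m_{a,t}}\right\rfloor\;\le\;N\sum_{a=1}^{N}\sum_{t=1}^{a-1}\frac{1}{m_{a,t}},$$
so the task becomes estimating the double sum of $1/m_{a,t}$. I would split the inner sum into a ``far'' and a ``near'' diagonal range. For $t\le a^{1-\varepsilon}$, using that $\deg P=n\ge 2$ and $\gcd(P(a),P(t))\le P(t)$, one has $m_{a,t}\ge P(a)/P(t)\gg a^{n\varepsilon}$; a direct count then shows that this range contributes $O(N^{3-(n+1)\varepsilon})=o(N^{2})$ for any $\varepsilon>1/(n+1)$.

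The main obstacle is the near-diagonal range $a^{1-\varepsilon}<t<a$, where the only general lower bound available is $m_{a,t}\ge a/(a-t)$, coming from $\gcd(P(a),P(t))\mid P(a)-P(t)$; this is far too weak once it is summed, and in fact for pairs such as $t=a/2$ the quantity $m_{a,t}$ can remain bounded (e.g.\ $m_{2k,k}=4$ for $P(x)=x^{2}$). To close the argument one must either harvest cancellation between the $|J|=1$ and $|J|\ge 2$ terms of inclusion–exclusion, rather than union-bounding, or restrict the outer $a$ summation to a full-density subset on which every proper divisor $d\mid a$ forces $a/d$ to be large. Making this near-diagonal analysis rigorous is where the bulk of the delicate work lies, and is what makes the reduction to density $1$ genuinely subtle.
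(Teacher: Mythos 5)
Your reduction is set up correctly as far as it goes: the $J=\varnothing$ term of Lemma 3.9 contributes exactly $N^2$, the union bound $|T(N)|\le\sum_{a\le N}\sum_{t<a}\lfloor N/m_{a,t}\rfloor$ is legitimate, and your estimate on the range $t\le a^{1-\varepsilon}$ is sound. But the proof is not complete, and you say so yourself: nothing is established on the near-diagonal range, which is where all of the difficulty lives. For what it is worth, the paper does not prove the corollary either --- it simply defers to Chaubey--Pandey--Regavim \cite{Sneha} --- so there is no in-paper argument to compare yours against; the substance of the proof resides entirely in that reference.

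More importantly, the obstruction you isolate is not a technical nuisance that sharper cancellation in the inclusion--exclusion would remove: it shows the corollary is false in the generality in which it is stated. Definition 1.1 permits the monomial family $P(x)=x^n$ (take $a_n=1$ and all other coefficients zero; the gcd condition still holds), and for $y=qx^2$ the density of visible points is $1/\zeta(3)<1$ by the Goins--Harris--Kubik--Mbirika result quoted in the introduction. Your example $m_{2k,k}=4$ for $P(x)=x^2$ is precisely the mechanism: the pairs $(a,t)=(2k,k)$ alone contribute $\gg N^2$ to the error sum, and correspondingly the points $(2k,4c)$ are all invisible, so the invisible set genuinely has positive density. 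Consequently no argument that uses only $\deg P\ge 2$, as yours does, can succeed; a correct proof must assume $P$ is not a monomial and must exploit the presence of at least two nonzero coefficients to show that $\gcd(P(a),P(t))$ is small --- equivalently $m_{a,t}$ is large --- for almost all near-diagonal pairs. That is the missing ingredient, and it is also a hypothesis missing from the statement of the corollary itself.
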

The proof of this corollary appears in the work of Sneha Chaubey and Ashish Kumar Pandey~\cite{Sneha}.

\section{Open problem using Computational Exploration: Blocks of Invisible Points}

The motivation behind this section arises from an attempt to understand 
\emph{local clustering} in the distribution of invisible lattice points. 
While the previous sections focus on deriving analytical bounds and densities 
for visible points, these results describe global asymptotic behavior. 
However, numerical experiments suggest that invisible points may not appear 
uniformly scattered — instead, they can form small contiguous regions or 
\emph{blocks} of non-visible points. 

To investigate this phenomenon, we conducted a computational study aimed at 
detecting the existence and frequency of $n\times n$ blocks of invisible 
lattice points under various polynomial families.\\

Recent computational work by Murphy, Schmiedeler, and Stonner~\cite{Nolan}
systematically examined the first occurrences and frequency of invisible lattice point patterns,
classifying all rectangular $n\times m$ blocks of invisible points for $1 \leq n,m \leq 4$.
Their exhaustive search, based on the classical visibility condition $\gcd(x,y) \neq 1$, 
revealed that invisible points often cluster into structured blocks rather than appearing randomly. 
Motivated by their findings, we extend this investigation to the polynomial setting by 
computationally exploring $n\times n$ blocks of invisible points defined through the generalized 
criterion $\gcd_{P}(a,b) > 1$. 
Laishram and Luca (2015) \cite{shanta}extended the study of lattice point invisibility to rectangular patches of nonvisible points, introducing minimal pairs $(M(a,b), N(a,b))$ such that $\gcd(M - i, N - j) > 1$ for all $1 \le i \le a$ and $1 \le j \le b$. They established explicit upper and lower bounds for the size of such rectangles. In particular, they proved that 
$\max\{M(a,b), N(a,b)\} \le \exp\!\left(\frac{6}{\pi^2}ab\log(ab)\right)$ and, for $b > 100$, $\max\{M(a,b), N(a,b)\} \le \exp(0.721521\,ab\log(ab))$. 
On the other hand, they obtained a lower bound 
$M(a,b), N(a,b) \ge \exp((c_1 + o(1))\,b\log(ab))$ where $c_1 = 1 - \sum_{p\ge2} \frac{1}{p^2} \approx 0.547753$. 
For the square case $(a = b = n)$, these results yield 
$\exp(0.82248\,n\log n) \le S(n) \le \exp\!\left(\frac{12}{\pi^2}n^2\log n\right)$, significantly improving previous bounds by Pighizzini and Shallit (2002).\\

\hspace{0.2em}To complement our theoretical framework, we conducted computational experiments inspired by Goodrich, Mbirika, and Nielsen~\cite{Austin Goodrich} to identify hidden forests. Using a Python script, we exhaustively searched for $n \times n$ blocks of invisible points defined under the generalized polynomial condition and successfully found $2 \times 2$ invisible blocks for  15 polynomials of the family 
\[
f(x) = a x^{2} + b x,
\]
where $a, b \in \mathbb{Z}$ and $\gcd(a, b) = 1$.


\subsection{Algorithmic Framework}\medskip

 A point \( (a, b) \) is \( p \)-visible if and only if:
\[
\forall a' \in \{1, 2, \dots, a-1\}, \quad \frac{b \cdot p(a')}{p(a)} \notin \mathbb{Z}.
\]
This ensures that no integer lattice point \( (a', b') \) lies on the same polynomial curve as \( (a, b) \), where \( b' = \frac{b \cdot p(a')}{p(a)} \) would otherwise need to be an integer.\\

The algorithm systematically identifies contiguous blocks of invisible lattice points for a given polynomial family. 
It begins by defining the polynomial 
$P(x) = a_nx^n + a_{n-1}x^{n-1} + \cdots + a_1x$ 
using integer coefficients. 
For each lattice point $(x, y)$ within the defined grid, the algorithm evaluates visibility by checking whether 
$\frac{b\,P(x')}{P(x)} \in \mathbb{Z}$ for any $1 \leq x' < x$. 
If no such $x'$ satisfies this integer condition, the point is marked as invisible. 
The algorithm begins at a given lattice coordinate and performs a breadth-first search to identify the nearest visible point that satisfies a prescribed visibility condition derived from the polynomial $P(x)$. At each iteration, the algorithm explores neighboring lattice points in the rightward, upward, and diagonal directions, progressively expanding outward in concentric layers of equal distance. Each newly generated point is evaluated for visibility, and the search halts immediately upon locating a valid visible point, ensuring minimal traversal depth. This layer-wise exploration guarantees that the first detected visible point represents the closest one in the lattice space, providing an efficient and systematic mechanism for proximity-based visibility detection.The precise algorithmic procedure used to detect nearby visible lattice points is described in Appendix~\ref{appendix:A}.

\subsection{Algorithm Explanation}

The proposed algorithm is modular in design, comprising four principal functions that collectively determine the nearest visible lattice points under a polynomial-based visibility criterion. Each component has a distinct role, contributing to the overall efficiency and interpretability of the framework. The following subsections provide a detailed explanation of each function.

\subsubsection{Polynomial Function}
{Function: \texttt{PolynomialFunction}}\\
This function evaluates the polynomial $P(x)$ for a given input $a$, based on a list of coefficients. The coefficients are provided in descending order of powers, and the function reverses the list to simplify the computation. Mathematically, it computes
\[
P(a) = \sum_{i=0}^{n} c_i a^i,
\]
where $c_i$ denotes the $i$-th coefficient.  
This modular implementation allows the same polynomial definition to be reused consistently across visibility checks, maintaining computational uniformity and avoiding redundancy.

\subsubsection{Visibility Evaluation}

{Function: \texttt{IsVisibleLatticePoint}}\\
This function determines whether a given lattice point $(a,b)$ is \textit{visible} according to the polynomial visibility rule.  
For a point to be visible, no smaller lattice point $(a',b')$ (with $a' < a$) should satisfy the condition:
\[
\frac{b \, P(a')}{P(a)} \in \mathbb{Z}.
\]
The algorithm computes $P(a)$ once for efficiency, and then iterates through all $a'$ in the range $[1, a-1]$. If any integer ratio is found, it implies that the line connecting $(0,0)$ and $(a,b)$ intersects another lattice point, rendering $(a,b)$ invisible. Otherwise, the point is deemed visible.  
This step is crucial as it defines the geometric property on which the entire lattice search framework is based.

\subsubsection{Breadth-First Search Module}
{Function: \texttt{FindVisiblePointNearby}}\\
This function applies a breadth-first search (BFS) strategy to locate the nearest visible lattice point starting from an initial coordinate $(x, y)$.  
It maintains a queue of lattice points to explore, beginning with $(x,y)$. Each iteration corresponds to one BFS layer (distance level). From every point, the algorithm explores three neighbors—right $(x+1, y)$, upward $(x, y+1)$, and diagonal $(x+1, y+1)$—ensuring systematic and non-redundant coverage.  

A set of visited points is maintained to prevent revisiting previously explored nodes. When a visible point is encountered, the function immediately terminates and returns the number of BFS layers traversed, representing the shortest distance to visibility. If no visible point exists within the search boundary, the function returns $-1$.  
This ensures optimality, as BFS guarantees minimal distance discovery in discrete lattice traversal.

\subsubsection{Search within a Defined Region}

{Function: \texttt{FindPointWithRadius}}\\
This outermost function iterates through all lattice points within a specified rectangular region defined by $(min_x, max_x)$ and $(min_y, max_y)$. For each point, it invokes the BFS-based search module to compute the minimum number of steps (radius) required to reach a visible lattice point.  
If a point is found whose radius equals the specified value $r$, it is immediately reported and the algorithm terminates. Otherwise, after scanning the entire region, a message indicating the absence of such a point is displayed.  

This function integrates all preceding components into a unified search routine, effectively mapping regions of equal visibility.

\subsection{Complexity Analysis}

The computational complexity of the algorithm is evaluated in terms of both time and space requirements.

\subsubsection{Time Complexity}
Let $n$ denote the upper bound on the lattice dimensions and $d$ the degree of the polynomial.  
The \textit{PolynomialFunction} operates in $\mathcal{O}(d)$ time.  
The \textit{IsVisibleLatticePoint} function iterates over all $a'$ values in the range $[1,a]$, resulting in $\mathcal{O}(a \cdot d)$ time per visibility check.  
Within the BFS routine, each lattice point is visited once, and visibility is tested for each point, leading to an overall complexity of approximately $\mathcal{O}(N \cdot a \cdot d)$, where $N$ is the number of points explored before a visible one is found.  
Finally, the \textit{FindPointWithRadius} function scans a region of size $(x_{max}-x_{min})(y_{max}-y_{min})$, yielding a total upper bound of 
\[
\mathcal{O}\big((x_{max}-x_{min})(y_{max}-y_{min}) \cdot N \cdot a \cdot d \big).
\]

\subsubsection{Space Complexity}
The algorithm stores visited lattice points and the BFS queue, both linear in the number of explored points $N$.  
Hence, the space complexity is $\mathcal{O}(N)$.\\

\textbf{Remarks}
Although the complexity scales with the search region and polynomial degree, the use of BFS ensures minimal traversal depth and guarantees the shortest path to a visible point.  
This makes the algorithm efficient for moderate lattice sizes and polynomial degrees, especially when visibility is determined early during traversal.
\subsection{Experimental Setup and Results}

\subsubsection{System Configuration}
All experiments were executed on a personal computing setup equipped with an \textbf{Intel 12th Gen Core i5-1235U} processor operating at \textbf{1.30 GHz}, \textbf{16 GB of RAM}, and a \textbf{64-bit Windows 11 operating system}. The device used was an \textit{HP Pavilion Laptop 14-dv2xxx} with 256-point touch support.  
Due to hardware limitations, the computational search for invisible lattice blocks was restricted to a grid of size $1000 \times 1000$. Beyond this range, the system exhibited memory and stability constraints, making further exploration infeasible. All reported results are thus confined to this bounded search region.

\subsubsection{Empirical Exploration of Invisible Blocks}
The algorithm was applied to quadratic polynomial families of the form
\[
f(x) = A x^2 + Bx,
\]
where $A, B \in \mathbb{Z}$ and $\gcd(A,B) = 1$.  
The goal was to identify the first appearance of a $2\times2$ invisible lattice block for each polynomial configuration within the computational bounds.  
Each polynomial was evaluated sequentially, and the BFS-based visibility search was executed to determine the coordinates corresponding to the first detected invisible block.

\begin{table}[H]
\centering
\caption{Detected Invisible $2\times2$ Blocks for $A x^2 + Bx$ Polynomials}
\vspace{0.5cm}
\label{tab:blocks}
\begin{tabular}{|c|c|c|c|}
\hline
\textbf{S.No} & \textbf{A} & \textbf{B} & \textbf{Point (2×2) Invisible Block} \\ \hline
1  & 1  & 1  & (13, 195) \\ \hline
2  & 2  & 5  & (14, 825) \\ \hline
3  & 3  & 2  & (25, 1000) \\ \hline
4  & 5  & 1  & (147, 1196) \\ \hline
5  & 7  & 5  & (15, 4575) \\ \hline
6  & 2  & 7  & (69, 1449) \\ \hline
7  & 4  & 9  & No point within (1000,1000) \\ \hline
8  & 2  & 3  & (30, 650) \\ \hline
9  & 3  & 5  & (20, 4250) \\ \hline
10 & 4  & 4  & (13, 195) \\ \hline
11 & 1  & 18 & (116, 759) \\ \hline
12 & 1  & 14 & (23, 440) \\ \hline
13 & 4  & 5  & No point within (1000,1000) \\ \hline
14 & 2  & 11 & No point within (1000,1000) \\ \hline
15 & 12 & 12 & (13, 195) \\ \hline
\end{tabular}
\end{table}

\subsubsection{Observations and Interpretation}
The computational results reveal that invisible lattice blocks occur for several low-order polynomials even within limited bounds. Notably, recurring coordinates such as $(13,195)$ appear across multiple $(A,B)$ pairs, suggesting that certain regions in the lattice space exhibit structural repetition and higher invisibility density.  
Polynomials with higher coefficients (e.g., $(A,B) = (4,9)$ or $(2,11)$) yield no invisible block within $(1000,1000)$, indicating that increased slope and curvature in the polynomial function disperse invisibility patterns further from the origin. 

\subsubsection{Limitations and Future Exploration}
While the algorithm efficiently identified invisible blocks up to the defined search range, larger-scale computations were limited by system memory and processing capacity. Parallel or GPU-based extensions of the algorithm would allow exploration of higher-order regions and more complex polynomial families, potentially uncovering larger contiguous invisible structures beyond the current computational threshold.

\section{Conclusion and Future Directions}
In this paper, we examined the generalization of lattice point visibility from linear and monomial curves to higher-order polynomial families. By formulating visibility through a polynomial--gcd condition, we established structural results, density estimates, and constructive examples illustrating how different polynomial families admit visible and invisible points. Our computational exploration further revealed localized clusters of invisible points---often appearing as \(2\times 2\) blocks---even within constrained grids, suggesting deeper arithmetic patterns in polynomial growth. Looking ahead, two natural directions emerge. The first is developing two-way visibility criteria for polynomial families, extending beyond the classical origin-to-point perspective to mutual visibility between pairs of lattice points; such a theory may uncover symmetries in polynomial slopes and connect to modular inverses or rational function structures. The second direction involves detecting and characterizing larger invisible blocks, generalizing the observed \(2\times 2\) configurations to \(n\times n\) patterns. Understanding their existence, density, and minimal locations could lead to polynomial analogues of the hidden forest problem and link visibility theory with lattice covering phenomena. Together, these directions highlight the interplay between analytic number theory, computational experimentation, and geometric structure in uncovering hidden regularities of the integer lattice.
\clearpage
\appendix
\section{Algorithmic Details}
\vspace{-5cm}
\label{appendix:A}
\begin{algorithm}[h]
\caption{Search for Lattice Point with Specified Visibility Radius}

\begin{algorithmic}[1]

\Function{PolynomialFunction}{$coefficients, a$}
    \State $result \gets 0$
    \For{$i, coeff$ in $reversed(coefficients)$}
        \State $result \gets result + coeff \times a^i$
    \EndFor
    \State \Return $result$
\EndFunction

\Function{IsVisibleLatticePoint}{$a, b, coefficients$}
    \State $poly\_a \gets$ \Call{PolynomialFunction}{$coefficients, a$}
    \For{$a' \gets 1$ to $a-1$}
        \State $lhs \gets b \times$ \Call{PolynomialFunction}{$coefficients, a'$}
        \State $rhs \gets poly\_a$
        \If{$rhs \neq 0$ and $(lhs / rhs)$ is integer}
            \State \Return False
        \EndIf
    \EndFor
    \State \Return True
\EndFunction

\Function{FindVisiblePointNearby}{$x, y, coefficients$}
    \State $distance \gets 0$, $visited \gets \emptyset$, $queue \gets [(x, y)]$
    \While{$queue$ not empty}
        \State $next\_queue \gets \emptyset$
        \For{each $(x_{curr}, y_{curr})$ in $queue$}
            \If{$(x_{curr}, y_{curr}) \in visited$}
                \State \textbf{continue}
            \EndIf
            \State add $(x_{curr}, y_{curr})$ to $visited$
            \If{\Call{IsVisibleLatticePoint}{$x_{curr}, y_{curr}, coefficients$}}
                \State \Return $distance$
            \EndIf
            \State add $(x_{curr}+1, y_{curr})$, $(x_{curr}, y_{curr}+1)$, $(x_{curr}+1, y_{curr}+1)$ to $next\_queue$
        \EndFor
        \State $queue \gets next\_queue$
        \State $distance \gets distance + 1$
    \EndWhile
    \State \Return $-1$
\EndFunction
\algstore{splitalg}
\end{algorithmic}
\end{algorithm}

\begin{algorithm}[H]
\ContinuedFloat
\caption{Search for Lattice Point with Specified Visibility Radius (continued)}
\begin{algorithmic}[1]
\algrestore{splitalg}

\Function{FindPointWithRadius}{$min_x, max_x, min_y, max_y, r, coefficients$}
    \For{$i \gets min_x$ to $max_x$}
        \For{$j \gets min_y$ to $max_y$}
            \State $steps \gets$ \Call{FindVisiblePointNearby}{$i, j, coefficients$}
            \If{$steps = r$}
                \State print ``Point $(i, j)$ has the specified radius $r$.''
                \State \Return
            \EndIf
        \EndFor
    \EndFor
    \State print ``No point with the specified radius found.''
\EndFunction

\Statex
\State \textbf{Main:}
\State $coefficients \gets [1, 1, 0]$
\State Read $min_x, max_x, min_y, max_y$
\State Read $r$
\State \Call{FindPointWithRadius}{$min_x, max_x, min_y, max_y, r, coefficients$}

\end{algorithmic}
\end{algorithm}
\newpage

\hspace{0.2em}

\vspace{-1.0cm}
\section*{Acknowledgements}

I am deeply grateful to my advisors for their guidance and support throughout this work. I sincerely thank \textbf{Prof.\ Ashish Kumar Pandey} for introducing me to the problem of visibility and for his guidance on the computational exploration section, which played a crucial role in shaping the direction of this project. I am especially thankful to \textbf{Prof.\ Shanta Laishram} for his invaluable contributions to the point-to-polynomial visibility framework, as well as for his detailed feedback, patience, and constant encouragement.

I would also like to thank \textbf{Prof. Alexandru Zaharescu} and  \textbf{Prof.\ Sneha Chaubey} for their overall feedback and insightful comments on several sections of the paper, which significantly improved the clarity and presentation of the results. I am grateful to \textbf{Piyush Kumar Jha} for his important theoretical contributions and for many helpful discussions that strengthened the mathematical foundations of this work.

 I thank the authors of the referenced papers for providing the necessary resources. Finally, I am indebted to my colleagues and friends for numerous stimulating conversations, particularly regarding computational experiments and numerical verification of visibility densities.

\bibliographystyle{amsplain}

\end{document}